\newtheorem{theorem}{Theorem}[section]
\newtheorem{lemma}{Lemma}[section]
\newtheorem{question}{Question}[section]
\newenvironment{proof}
      {\medskip\noindent{\bf Proof:}\hspace{1mm}}
      {\hfill$\Box$\medskip}
\def\qed{\ifvmode\mbox{ }\else\unskip\fi\hskip 1em plus 10fill$\Box$}
\def\Ddots{\mathinner{\mkern1mu\raise\p@
\vbox{\kern7\p@\hbox{.}}\mkern2mu
\raise4\p@\hbox{.}\mkern2mu\raise7\p@\hbox{.}\mkern1mu}}
\title{\vspace{-0.7cm}Lines in Euclidean Ramsey theory}
\author{David Conlon\thanks{Mathematical Institute, Oxford OX2 6GG,
United Kingdom. Email: {\tt david.conlon@maths.ox.ac.uk}. Research
supported by a Royal Society University Research Fellowship and by ERC Starting Grant 676632.}\and
Jacob Fox\thanks{Department of Mathematics, Stanford University, Stanford, CA 94305, USA. Email: {\tt jacobfox@stanford.edu}. Research supported by a Packard Fellowship and by NSF Career Award DMS-1352121.}}
\date{}
\begin{document}
\maketitle

\begin{abstract}
Let $\ell_m$ be a sequence of $m$ points on a line with consecutive points of distance one. For every natural number $n$, we prove the existence of a red/blue-coloring of $\mathbb{E}^n$ containing no red copy of $\ell_2$ and no blue copy of $\ell_m$ for any $m \geq 2^{cn}$. This is best possible up to the constant $c$ in the exponent. It also answers a question of Erd\H{o}s, Graham, Montgomery, Rothschild, Spencer and Straus from 1973. They asked if, for every natural number $n$, there is a set $K \subset \mathbb{E}^1$ and a red/blue-coloring of $\mathbb{E}^n$ containing no red copy of $\ell_2$ and no blue copy of $K$.
\end{abstract}

\section{Introduction}

Let $\mathbb{E}^n$ denote $n$-dimensional Euclidean space, that is, $\mathbb{R}^n$ equipped with the Euclidean distance. Following Erd\H{o}s, Graham, Montgomery, Rothschild, Spencer and Straus~\cite{EGMRSS2}, we study the following question. 

\begin{question}
For which subsets $K \subset \mathbb{E}^n$ does every red/blue-coloring of $\mathbb{E}^n$ contain a red pair of points of distance one or a blue isometric copy of $K$?
\end{question}

In what follows, we will write $\ell_m$ for a sequence of $m$ points on a line with consecutive points of distance one and $\mathbb{E}^n \longrightarrow (\ell_2,K)$ if every red/blue-coloring of $\mathbb{E}^n$ contains either a red copy of $\ell_2$ or a blue copy of $K$, where a copy of a set will always mean an isometric copy. Conversely, $\mathbb{E}^n \centernot\longrightarrow (\ell_2,K)$ expresses the fact that there is some red/blue-coloring of $\mathbb{E}^n$ which contains neither a red copy of $\ell_2$ nor a blue copy of $K$. 

The problem of determining which $n$ and $K$ satisfy the relation $\mathbb{E}^n \longrightarrow (\ell_2,K)$ has received considerable attention, with a particular focus on small values of $n$. For example, Erd\H{o}s et al.~\cite{EGMRSS2} showed that $\mathbb{E}^2 \longrightarrow (\ell_2,\ell_4)$ and $\mathbb{E}^2 \longrightarrow (\ell_2,K)$ for any three-point set $K$. Juh\'asz~\cite{Juh} later improved the latter result to cover all four-point planar sets, while just recently Tsaturian~\cite{Tsat} improved the former result by showing that $\mathbb{E}^2 \longrightarrow (\ell_2,\ell_5)$. In three dimensions, Iv\'an~\cite{Ivan} showed that $\mathbb{E}^3 \longrightarrow (\ell_2,K)$ for any five-point set $K \subset \mathbb{E}^3$. The particular case where $K = \ell_5$ was recently improved by Arman and Tsaturian~\cite{ArmTsat}, who showed that $\mathbb{E}^3 \longrightarrow (\ell_2,\ell_6)$.

On the other hand, Csizmadia and T\'oth~\cite{CsTo} identified a set $K$ of $8$ points in the plane, namely, a regular heptagon with its center, such that $\mathbb{E}^2 \centernot\longrightarrow (\ell_2,K)$. This improved a result of Juh\'asz~\cite{Juh}, who had previously identified a set $K$ of $12$ points with the same property. Our chief concern in this paper will be with extending these results to higher dimensions by studying the smallest possible size of a set $K \subset \mathbb{E}^n$ such that $\mathbb{E}^n \centernot\longrightarrow (\ell_2,K)$.

In general, $|K|$ can be unbounded in terms of $n$ and still satisfy $\mathbb{E}^n \longrightarrow (\ell_2,K)$. For example, any subset $K$ of the unit sphere in $\mathbb{E}^n$ satisfies $\mathbb{E}^n \longrightarrow (\ell_2,K)$. Indeed, in a red/blue-coloring of $\mathbb{E}^n$, if there is no red point, then we clearly get a copy of $K$, while if there is a red point, then the sphere of radius one around that point must be blue, so we again get a blue copy of $K$. 

However, our main result shows that under some mild conditions a set $K \subset \mathbb{E}^n$ such that $\mathbb{E}^n \longrightarrow (\ell_2,K)$ can have size at most exponential in $n$. To state the result, we say that a point set $S \subset \mathbb{E}^n$ is {\it $t$-separated} if any two points in $S$  have distance at least $t$. Here and throughout, we use $\log$ to denote log base $2$. 

\begin{theorem} \label{thm:main}
If $R>2$ and $K$ is a $1$-separated set of points in $\mathbb{E}^n$ with diameter at most $R - 1$ and $|K| > 10^{4n} \log R$, then $\mathbb{E}^n \centernot \longrightarrow (\ell_2,K)$. 
\end{theorem}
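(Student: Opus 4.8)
\medskip\noindent{\bf Proof strategy.}\hspace{1mm}
The plan is a probabilistic construction. Put $m=|K|$, fix the small constant $\delta=1/4$, and --- to keep the set of copies of $K$ finite --- work on the flat torus $\mathbb{T}=\mathbb{R}^n/L\mathbb{Z}^n$ with $L=4R$; since $\mathrm{diam}(K)\le R-1<L/2$, the isometric copies of $K$ in $\mathbb{T}$ are exactly those in $\mathbb{E}^n$ (none wraps around), and an $L$-periodic coloring of $\mathbb{E}^n$ is the same thing as a coloring of $\mathbb{T}$. Let $\Pi$ be a Poisson point process of intensity $\lambda$ on $\mathbb{T}$ (with $\lambda$ to be chosen), write $A(x)=\{y:1-2\delta\le|x-y|\le 1+2\delta\}$, and let $R=\{\pi\in\Pi:A(\pi)\cap\Pi=\emptyset\}$ be the ``annular hard core'' of $\Pi$. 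Color a point of $\mathbb{T}$ red if it lies within distance $\delta$ of $R$, and blue otherwise. Whatever $\lambda$ is, this red set contains no copy of $\ell_2$: if $x,y$ were both red with $|x-y|=1$, then $x\in B(\pi,\delta)$ and $y\in B(\pi',\delta)$ for some $\pi,\pi'\in R$; we cannot have $\pi=\pi'$ since $2\delta<1$, and if $\pi\ne\pi'$ then $|\pi-\pi'|\in[1-2\delta,1+2\delta]$, so $\pi'\in A(\pi)\cap\Pi$, contradicting $\pi\in R$. It therefore remains to choose $\lambda$ so that, with positive probability, every copy of $K$ meets the red set; such a coloring then exists and witnesses $\mathbb{E}^n\centernot\longrightarrow(\ell_2,K)$.

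The crux is a tail bound for a single copy. Fix an isometry $g$ and, for $1\le i\le m$, let $X_i$ be the indicator of the event that $g(k_i)$ lies within distance $\delta/2$ of $R$ (the factor $\tfrac12$ leaves room for a discretization later). By stationarity each $X_i$ has the same mean $\rho:=\mathbb{E}[X_i]$, and a routine Poisson estimate --- bounding $\rho$ below by the probability that exactly one point of $\Pi$ lies in $B(g(k_i),\delta/2)$ while none lies in the enlarged annulus $\{y:1-3\delta\le|g(k_i)-y|\le 1+3\delta\}$, which forces that point into $R$ --- gives $\rho\ge c^{-n}$ for an absolute constant $c$, once $\lambda$ is taken to be the reciprocal of the volume of that enlarged annulus. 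The key structural observation is that $X_i$ depends only on the restriction of $\Pi$ to the ball $B(g(k_i),1+3\delta)$, so $X_i$ and $X_j$ are independent unless $|g(k_i)-g(k_j)|\le 2(1+3\delta)<4$; as $K$ is $1$-separated, at most $8^n$ points of $g(K)$ lie within distance $4$ of any given one, so the dependency graph of $X_1,\dots,X_m$ has maximum degree below $8^n$. Picking greedily an independent set $S$ in this graph with $|S|\ge m/(8^n+1)$, the variables $(X_i)_{i\in S}$ are mutually independent (their defining balls being pairwise disjoint), whence
\[
\Pr\bigl[\,g(K)\text{ is entirely blue}\,\bigr]\;=\;\Pr\bigl[X_i=0\text{ for all }i\bigr]\;\le\;\prod_{i\in S}\Pr[X_i=0]\;=\;(1-\rho)^{|S|}\;\le\;\exp\!\Bigl(-\frac{\rho\,m}{8^n+1}\Bigr).
\]

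To conclude, take a net $\mathcal{N}$ of isometries so that every isometry agrees to within $\delta/2$, at every point of $K$, with some $g_0\in\mathcal{N}$; since translations may be taken modulo $L\mathbb{Z}^n$ and $K$ lies in a ball of radius $R$, one may take $|\mathcal{N}|\le\exp(C_1n^2\log R)$ for an absolute constant $C_1$ (about $(R/\delta)^n$ translation choices, times $(R/\delta)^{O(n^2)}$ choices of orthogonal part, the orthogonal group having dimension $\binom{n}{2}$). If some point of $g_0(K)$ lies within $\delta/2$ of $R$, then $g(K)$ meets the red set for every $g$ agreeing with $g_0$ to within $\delta/2$ on $K$; so it suffices that for each $g_0\in\mathcal{N}$ at least one $X_i(g_0)$ equal $1$, and by the displayed bound and a union bound the probability that this fails for some $g_0$ is at most $\exp(C_1n^2\log R)\exp\bigl(-\rho m/(8^n+1)\bigr)$, which is below $1$ once $\rho m/(8^n+1)>(C_1+1)n^2\log R$. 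Using $\rho\ge c^{-n}$ this holds as soon as $m>(8c)^n(C_1+1)n^2\log R$, and since $(8c)^n(C_1+1)n^2\le 10^{4n}$ for all $n\ge 1$ (the constants $c$ and $C_1$ being comfortably under $10^3$), the hypothesis $|K|>10^{4n}\log R$ is more than enough. The main obstacle is the second paragraph: upgrading the merely ``positive per point'' probability $\rho$ to a bound exponentially small in $|K|$. This hinges on the dependency graph of the $X_i$ having degree only exponential in $n$ --- a consequence of the $1$-separation of $K$ together with the fact that the thinning has range $O(1)$ --- which lets one restrict attention to an independent subset of the $m$ indices of size at least $m/(8^n+1)$; arranging the Poisson estimates and the isometry net carefully enough for the constants to fall under $10^{4n}$ is then routine bookkeeping.
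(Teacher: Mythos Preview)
Your approach is correct and reaches the same conclusion as the paper, but by a genuinely different route. The paper works with a deterministic maximal $1/3$-separated set $P$ in the torus $\mathbb{T}_R^n$, takes its Voronoi decomposition, and then randomly keeps each cell with probability $20^{-n}$, thinning to a hard-core set $S$; the red region is the union of the surviving cells. The key technical device for the union bound is the Milnor--Thom theorem: rather than netting isometries, the paper observes that whether a copy of a $5$-separated subset $K'\subset K$ lands in a prescribed tuple of cells is determined by the signs of at most $5^n|K'|\cdot 3^n|P|$ linear functions of the $(d{+}1)n\le (n{+}1)n$ parameters describing the isometry, and Milnor--Thom then bounds the number of realizable cell-assignments by roughly $\exp\bigl(O(n^3\log R)\bigr)$.

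Your construction --- a Poisson process with annular (Mat\'ern-type) hard-core thinning, followed by a direct $\epsilon$-net on the isometry group $\mathbb{R}^n\rtimes O(n)$ --- is more elementary: it sidesteps Milnor--Thom entirely, and your net bound $|\mathcal{N}|\le\exp\bigl(O(n^2\log R)\bigr)$ is in fact a touch sharper than the sign-pattern count. The independence step is the same in spirit (pass to a well-separated subset of $K$ so that the ``is this point near a red center'' indicators become genuinely independent), though you carry it out per isometry rather than once. Two small points worth tightening: in the displayed bound, ``$g(K)$ is entirely blue'' is not literally equal to ``$X_i=0$ for all $i$'' (the latter is the weaker $\delta/2$-event) --- the inequality $\le$ is what you need and what the subsequent net argument actually uses. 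Second, the covering number of $O(n)$ in operator norm at scale $\delta/R$ is more honestly $(C n^{1/2}R/\delta)^{n(n-1)/2}$, introducing a harmless $\log n$ into the exponent; this is still comfortably absorbed by $10^{4n}$, but should be recorded when you assert the final constant comparison. (Also, you use $R$ both for the diameter parameter and for the hard-core point set; renaming one would improve readability.)
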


In particular, for $m = 10^{5n}$, we see that $\mathbb{E}^n \centernot\longrightarrow (\ell_2, \ell_m)$. This simple corollary is already enough to answer a problem raised by Erd\H{o}s et al.~\cite{EGMRSS2}, namely, whether, for every natural number $d$, there is a natural number $n$ depending only on $d$ such that $\mathbb{E}^n \rightarrow (\ell_2, K)$ for every $K \subset \mathbb{E}^d$. Erd\H{o}s et al.~state that they expect the answer to this question to be negative and our result confirms this already for $d = 1$, a special case stressed in~\cite{EGMRSS2}, showing that $n$ must grow logarithmically in the size of $|K|$.

The exponential dependence in Theorem~\ref{thm:main}, and hence the logarithmic dependence above, is also necessary. In fact, Szlam~\cite{Sz01} proved the stronger result that every red/blue-coloring of $\mathbb{E}^n$ contains either a red copy of $\ell_2$ or a blue \emph{translate} of any set $K$ of size at most $2^{c'n}$.
For the sake of completeness, we include his short proof here. We will need the seminal result of Frankl and Wilson~\cite{FW} that there exists a positive constant $c'$ such that any coloring of $\mathbb{E}^n$ with at most $2^{c'n}$ colors contains a pair of points of distance one with the same color (see~\cite{R00} for the current best estimate on $c'$). 

Suppose now that $K = \{k_1, \dots, k_t\} \subset \mathbb{E}^n$ is a set of size at most $2^{c' n}$ and there is a red/blue-coloring of $\mathbb{E}^n$ with no blue copy of $K$. Then, for each $p \in \mathbb{E}^n$, there is at least one $i$ such that $p + k_i$ is red, since otherwise the set $p + K$ would be a blue translate of $K$. We may therefore color the points of $\mathbb{E}^n$ in $t \leq 2^{c'n}$ colors, giving the point $p$ the color $i$ for some $i$ such that $p + k_i$ is red, always choosing the minimum such $i$. By the result of Frankl and Wilson, there must then exist two points $p$ and $p'$ of distance one which receive the same color, say $j$. But then $p + k_j$ and $p' + k_j$ are two points of distance one both of which are colored red. This gives the required result. In particular, we have the following counterpart to Theorem~\ref{thm:main}, which we again stress is due to Szlam~\cite{Sz01}.

\begin{theorem} \label{thm:FW}
There exists a positive constant $c'$ such that $\mathbb{E}^n \longrightarrow (\ell_2,K)$ for any set $K \subset \mathbb{E}^n$ of size at most $2^{c'n}$.
\end{theorem}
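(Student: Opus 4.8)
The plan is essentially to record that the statement is an immediate consequence of the reduction carried out in the paragraph preceding it, so the only work is to isolate the logic cleanly and to fix the constant. I would take $c'$ to be precisely the constant supplied by Frankl and Wilson, so that every colouring of $\mathbb{E}^n$ with at most $2^{c'n}$ colours contains two points at distance one receiving the same colour.

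First I would set up the contrapositive. Suppose $K = \{k_1, \dots, k_t\} \subset \mathbb{E}^n$ with $t \le 2^{c'n}$ and that some red/blue-colouring of $\mathbb{E}^n$ contains no red copy of $\ell_2$, that is, no two red points at distance one; the goal is to produce a blue copy of $K$. For each $p \in \mathbb{E}^n$, if the translate $p + K$ is entirely blue we are done, so we may assume it is not; then I define $\chi(p)$ to be the least index $i$ with $p + k_i$ red. This gives a colouring $\chi \colon \mathbb{E}^n \to \{1, \dots, t\}$ using at most $t \le 2^{c'n}$ colours.

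Now I would invoke the Frankl--Wilson theorem on $\chi$ to obtain points $p, p' \in \mathbb{E}^n$ with $|p - p'| = 1$ and $\chi(p) = \chi(p') = j$. Then $p + k_j$ and $p' + k_j$ are both red, and they are at distance $|(p + k_j) - (p' + k_j)| = |p - p'| = 1$, so they form a red copy of $\ell_2$ — a contradiction. Hence some translate of $K$ must have been entirely blue, which in particular yields a blue copy of $K$, giving $\mathbb{E}^n \longrightarrow (\ell_2, K)$.

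There is no real obstacle here: the entire content lies in the Frankl--Wilson theorem, which we are free to assume, and the argument reduces to the one-line observation that the auxiliary colouring $\chi$ uses at most $2^{c'n}$ colours together with the trivial check that translating both $p$ and $p'$ by the same vector $k_j$ preserves their distance. I note also that the argument in fact proves the formally stronger conclusion with "copy" replaced by "translate", so Theorem~\ref{thm:FW} is just the weakening of this to isometric copies.
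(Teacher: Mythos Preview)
Your proposal is correct and is essentially identical to the paper's proof (which, as you note, appears in the paragraph immediately preceding the theorem): define $\chi(p)$ to be the least $i$ with $p+k_i$ red, apply Frankl--Wilson to $\chi$, and translate the resulting monochromatic pair by $k_j$ to obtain a red $\ell_2$. The only cosmetic difference is the direction of the contrapositive---you assume no red $\ell_2$ and find a blue $K$, while the paper assumes no blue $K$ and finds a red $\ell_2$---and you correctly observe that the argument in fact yields a blue \emph{translate} of $K$, which the paper also remarks.
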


\section{Proof of Theorem~\ref{thm:main}}

We will prove the existence of a periodic red/blue-coloring of $\mathbb{E}^n$ (with period $R$ in the standard coordinates) such that no two red points have distance one and there is no blue copy of $K$.

Let $\mathbb{T}_R^n=(\mathbb{E}/R \mathbb{Z})^n$ be the $n$-dimensional torus with period $R$ in each direction. Let $P$ be any maximal $1/3$-separated subset of $\mathbb{T}_R^n$. One can simply construct such a set $P$ greedily. Consider the Voronoi decomposition of $\mathbb{T}_R^n$ with respect to $P$. This partitions $\mathbb{T}_R^n$ into cells $V_p$, one for each point $p \in P$, where $V_p$ consists of the set of points closer to $p$ than any other point in $P$. From the maximality of $P$, every point in $V_p$ has distance at most $1/3$ from $p$. In particular, each $V_p$ has diameter at most $2/3$.  

\begin{lemma}
$|P| \leq (4n^{1/2}R)^n$. 
\end{lemma}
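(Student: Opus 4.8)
The plan is to use a standard volume-packing argument. Since $P$ is $1/3$-separated in $\mathbb{T}_R^n$, the open balls of radius $1/6$ centered at the points of $P$ are pairwise disjoint (any two centers are at distance at least $1/3$, so balls of radius $1/6$ cannot overlap), and they all sit inside $\mathbb{T}_R^n$. Comparing volumes, $|P|$ times the volume of a ball of radius $1/6$ is at most the volume $R^n$ of the torus. Thus $|P| \le R^n / \mathrm{vol}(B_{1/6}^n)$, and it remains to lower-bound the volume of an $n$-dimensional ball of radius $1/6$.

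The only slightly delicate point is that the volume of the unit ball $\omega_n = \pi^{n/2}/\Gamma(n/2+1)$ decays superexponentially, so I cannot just say "the ball has volume bounded below by a constant." Instead I would use a crude but clean lower bound: the ball of radius $r$ in $\mathbb{E}^n$ contains the cube of side length $2r/\sqrt n$ (the cube $[-r/\sqrt n, r/\sqrt n]^n$ has circumradius exactly $r$, hence lies in $B_r^n$), so $\mathrm{vol}(B_r^n) \ge (2r/\sqrt n)^n$. Taking $r = 1/6$ gives $\mathrm{vol}(B_{1/6}^n) \ge (1/(3\sqrt n))^n$, and therefore
\[
|P| \le \frac{R^n}{(1/(3\sqrt n))^n} = (3 n^{1/2} R)^n \le (4 n^{1/2} R)^n,
\]
which is the claimed bound (with room to spare).

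The main—and really only—obstacle here is the superexponential decay of the Euclidean ball volume, which is precisely why the bound carries the factor $n^{1/2}$ rather than an absolute constant; the inscribed-cube trick handles this cleanly and avoids any appeal to Stirling's formula for $\Gamma$. One should also double-check the harmless subtlety that a metric ball of radius $1/6$ on the torus $\mathbb{T}_R^n$, for $R > 2$, genuinely has the same volume as a Euclidean ball of that radius (it does, since $1/6$ is far smaller than $R/2$, so there is no wraparound), so the packing argument transfers verbatim from $\mathbb{E}^n$ to the torus.
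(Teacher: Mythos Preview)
Your proof is correct and follows the same volume-packing argument as the paper: disjoint balls of radius $1/6$ around the points of $P$ inside the torus of volume $R^n$. The only difference is that you lower-bound the ball volume via the inscribed cube $[-r/\sqrt n, r/\sqrt n]^n$, whereas the paper uses the exact formula $r^n\pi^{n/2}/\Gamma(n/2+1)$ together with the crude bound $\Gamma(n/2+1)\le n^{n/2}$; your route is slightly more elementary and even gives the marginally better constant $3$ in place of $(36/\pi)^{1/2}$.
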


\begin{proof}
Since each pair of points in $P$ have distance at least $1/3$, the balls of radius $r=1/6$ around each point are disjoint. A ball in $n$-space of radius $r$ has volume $r^n \pi^{n/2}/\Gamma(n/2 + 1)$, where the gamma function satisfies $\Gamma(n/2+1)=(n/2)!$ if $n$ is even and $\Gamma(n/2+1)=\sqrt{\pi} \cdot n!!/2^{(n+1)/2}$ if $n$ is odd. In either case, we have $\Gamma(n/2 + 1) \leq n^{n/2}$, so the volume of the $n$-dimensional ball is at least $(r^2\pi/n)^{n/2}$. The balls of radius $1/6$ around the points of $P$ are disjoint and the volume of the torus $\mathbb{T}_R^n$ is $R^n$, so the number of points in $P$ is at most $R^n/((1/6)^2\pi/n)^{n/2} = (36nR^2/\pi)^{n/2} < (4n^{1/2}R)^n$.
\end{proof}

\begin{lemma}\label{basic}
 If $S \subset \mathbb{E}^n$ is $t$-separated, then, for any point $p \in \mathbb{E}^n$ and any $s \geq 0$, the number of points of $S$ within distance $s$ of $p$ is at most $(2s/t + 1)^n$. 
\end{lemma}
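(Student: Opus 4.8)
The plan is to use the same volume/packing argument as in the previous lemma, but now comparing the points of $S$ that lie within distance $s$ of $p$ against a ball of radius $s + t/2$ rather than the whole torus. First I would observe that since $S$ is $t$-separated, the open balls of radius $t/2$ centered at the points of $S$ are pairwise disjoint. If a point $q \in S$ satisfies $|q - p| \le s$, then the ball of radius $t/2$ about $q$ is contained in the ball of radius $s + t/2$ about $p$. Hence, writing $N$ for the number of points of $S$ within distance $s$ of $p$, disjointness gives $N \cdot \mathrm{vol}(B_{t/2}) \le \mathrm{vol}(B_{s+t/2})$, and since volume of a ball of radius $r$ in $\mathbb{E}^n$ scales as $r^n$ times a dimension-dependent constant, that constant cancels and we get $N \le \left(\frac{s + t/2}{t/2}\right)^n = (2s/t + 1)^n$, as claimed.

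There is essentially no obstacle here: the only mild point to be careful about is that the packing argument requires the small balls to be disjoint, which needs the separation to be \emph{at least} $t$ (so radius exactly $t/2$ works, with the balls taken open, or closed balls of radius slightly less than $t/2$ and then a limiting argument), and that the containment $B_{t/2}(q) \subseteq B_{s+t/2}(p)$ is just the triangle inequality. One should also note the edge case $s = 0$, where the bound reads $N \le 1$, consistent with $S$ being $t$-separated with $t > 0$ (or, if $t = 0$, the statement is vacuous or trivial depending on convention, but the applications always have $t > 0$).

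An alternative, slightly slicker route avoids volumes entirely by a direct counting argument on a grid, but it gives worse constants, so I would stick with the volume comparison. In fact the cleanest phrasing is: the map $q \mapsto B_{t/2}(q)$ sends distinct points of $S$ to disjoint balls of equal volume, all contained in $B_{s+t/2}(p)$, so their number is at most the ratio of volumes, which is $(1 + 2s/t)^n$. I expect the write-up to be only a few lines, with the triangle inequality and the volume scaling $\mathrm{vol}(B_r) = r^n \mathrm{vol}(B_1)$ being the only facts invoked.
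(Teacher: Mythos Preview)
Your proposal is correct and is essentially identical to the paper's own proof: the paper also packs disjoint balls of radius $t/2$ around the points of $S$ into the ball of radius $s+t/2$ around $p$ and reads off the bound $(\frac{s+t/2}{t/2})^n=(2s/t+1)^n$ from the ratio of volumes. Your additional remarks about open versus closed balls and the $s=0$ edge case are fine but not needed for the write-up.
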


\begin{proof}
 The balls of radius $t/2$ around each point of $S$ are disjoint and, for each point $p' \in S$ with distance at most $s$ from $p$, the ball of radius $s+t/2$ around $p$ contains the ball of radius $t/2$ around $p'$. Hence, by a volume argument, there are at most $(\frac{s+t/2}{t/2})^n = (2s/t + 1)^n$ points of distance at most $s$ from $p$. 
\end{proof}

\begin{lemma}\label{halfspace}
Each copy in $\mathbb{E}^n$ of the Voronoi cell $V_p$ is a convex body defined by the intersection of at most $5^n$ half-spaces. 
\end{lemma}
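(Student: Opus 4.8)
The plan is to realize each copy of $V_p$ in $\mathbb{E}^n$ as a finite intersection of closed half-spaces and then argue that all but at most $5^n$ of the defining constraints are redundant. By definition, a point $x$ of the torus lies in $V_p$ exactly when $x$ is at least as close to $p$ as to every other point $q\in P$, and each such condition cuts out a closed half-space bounded by the perpendicular bisector of $p$ and $q$. Thus $V_p$ is \emph{a priori} an intersection of $|P|-1$ half-spaces, and since $|P|$ can be exponentially large in $n$ the whole point is to throw away the constraints coming from distant $q$.

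The first step is to pass from the torus to $\mathbb{E}^n$. Since every point of $V_p$ lies within distance $1/3$ of $p$, the cell has diameter at most $2/3<R$, which lets us lift it faithfully. Let $\pi\colon\mathbb{E}^n\to\mathbb{T}_R^n$ be the quotient map, fix a lift $\tilde p\in\mathbb{E}^n$ of $p$, and let $L\subset\mathbb{E}^n$ be the set of \emph{all} lifts of \emph{all} points of $P$. Because $R>2$, the restriction of $\pi$ to $\bar B(\tilde p,1/3)$ is an isometry onto the torus ball $\bar B(p,1/3)\supseteq V_p$, and for $x$ in this ball one has $d_{\mathbb{T}_R^n}(\pi(x),p)=|x-\tilde p|$ and $d_{\mathbb{T}_R^n}(\pi(x),q)=\min\{|x-q'| : q'\in L,\ \pi(q')=q\}$. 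Consequently $\tilde V_p:=\pi^{-1}(V_p)\cap\bar B(\tilde p,1/3)$ is an isometric copy of $V_p$ and is described by $\tilde V_p=\bigcap_{q'\in L\setminus\{\tilde p\}}H_{\tilde p,q'}$, where $H_{\tilde p,q'}$ is the closed half-space of points at least as close to $\tilde p$ as to $q'$; in particular $\tilde V_p$ is convex. Maximality of $P$ moreover forces $\tilde V_p\subseteq\bar B(\tilde p,1/3)$: any $x$ in the intersection has some $q'\in L$ with $|x-q'|\le 1/3$ (since some point of $P$ is within $1/3$ of $\pi(x)$), and then $|x-\tilde p|\le|x-q'|\le 1/3$. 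Hence $\tilde V_p$ is a bounded convex body with $\tilde p$ in its interior, and every copy of $V_p$ in $\mathbb{E}^n$ is congruent to it.

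The second step is the redundancy argument. Note first that $L$ is $1/3$-separated: two lifts of the same point of $P$ differ by a nonzero vector of $R\mathbb{Z}^n$ and hence lie at distance at least $R>1/3$, while two lifts of distinct points of $P$ are at least as far apart as those points are in $\mathbb{T}_R^n$, hence at least $1/3$ apart. Now, if $q'\in L$ satisfies $|\tilde p-q'|>2/3$, then the bounding hyperplane of $H_{\tilde p,q'}$, being the perpendicular bisector of $\tilde p$ and $q'$, lies at distance $|\tilde p-q'|/2>1/3$ from $\tilde p$, so $\bar B(\tilde p,1/3)\subseteq H_{\tilde p,q'}$ and this constraint is already implied by $\tilde V_p\subseteq\bar B(\tilde p,1/3)$. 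Thus only those $q'\in L$ with $0<|\tilde p-q'|\le 2/3$ contribute nontrivially, and applying Lemma~\ref{basic} to the $1/3$-separated set $L$ with $s=2/3$ and $t=1/3$ shows that there are at most $(2\cdot(2/3)/(1/3)+1)^n=5^n$ of them, counting $\tilde p$ itself. Therefore $\tilde V_p$, and hence every copy of $V_p$ in $\mathbb{E}^n$, is the intersection of at most $5^n-1<5^n$ closed half-spaces, as required.

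The only step that needs genuine care is the lifting: one must confirm that $\tilde V_p$ is really isometric to $V_p$ — this is exactly where $\mathrm{diam}(V_p)<R$, equivalently $R>2$, is used — and, crucially, phrase the half-space description in terms of the $1/3$-separated set $L$ of lifts rather than in terms of $P$ inside the torus, since it is the packing estimate for $L$, not for $P$, that delivers the clean bound $5^n$. Everything after that reduces to the elementary packing inequality already established in Lemma~\ref{basic}.
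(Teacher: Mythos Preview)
Your proof is correct and follows essentially the same approach as the paper: both arguments observe that only points of $P$ within distance $2/3$ of $p$ can contribute a facet of $V_p$, and then invoke Lemma~\ref{basic} with $s=2/3$, $t=1/3$ to bound the number of such points by $5^n$. The difference is that you are considerably more careful about the passage from the torus $\mathbb{T}_R^n$ to $\mathbb{E}^n$, working with the full set $L$ of lifts and verifying explicitly that $\tilde V_p$ is the Euclidean Voronoi cell of $\tilde p$ with respect to $L$; the paper treats this informally, tacitly identifying a neighbourhood of $p$ in the torus with a Euclidean ball and speaking of half-spaces determined by nearby $p'\in P$ without spelling out the lift.
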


\begin{proof}
A point $q$ on the boundary of $V_p$ is on the hyperplane equidistant to $p$ and some other point $p' \in P$, where this distance is at most $1/3$. This implies that $p'$ has distance at most 2/3 from $p$. Since the points in $P$ are $1/3$-separated, Lemma \ref{basic} implies that there are at most $5^n$ points of $P$ of distance at most $2/3$ from $p$. Therefore, since the Voronoi cell $V_p$ is the intersection of half-spaces that are defined by hyperplanes which are equidistant from $p$ and $p'$ for some $p'$ of distance at most $2/3$ from $p$, the result follows.
\end{proof}

\begin{lemma}\label{sep}
If $K$ is a $1$-separated point set in $\mathbb{E}^n$ and $s \geq 1$, then there is a set $K' \subset K$ that is $s$-separated and has size at least $|K|/(2s+1)^n$. 
\end{lemma}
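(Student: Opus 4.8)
The plan is to extract $K'$ greedily, or equivalently via a simple packing/volume argument. First I would observe that this is really the same pigeonhole idea used in Lemma~\ref{basic}: if we pick a maximal $s$-separated subset $K' \subset K$, then by maximality every point of $K$ lies within distance $s$ of some point of $K'$. So it suffices to bound, for each fixed $q \in K'$, the number of points of $K$ within distance $s$ of $q$. Since $K$ is $1$-separated, Lemma~\ref{basic} (applied with $t = 1$) gives that this number is at most $(2s+1)^n$. Hence $|K| \leq |K'| \cdot (2s+1)^n$, which rearranges to $|K'| \geq |K|/(2s+1)^n$, as desired.

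The one wrinkle is that Lemma~\ref{basic} as stated bounds the number of points of an arbitrary $t$-separated set within distance $s$ of an \emph{arbitrary} point $p \in \mathbb{E}^n$, so it applies directly here with $S = K$, $t = 1$, and $p = q \in K' \subseteq K$. No separate argument is needed. The greedy construction of a maximal $s$-separated subset is standard (process the points of $K$ in any order, adding a point to $K'$ whenever it is at distance $\geq s$ from all previously chosen points); $K$ being a point set we may as well take it finite for the counting, or note the bound holds for every finite subset and hence the set is finite of the claimed size — in the application $K$ will be finite anyway.

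I would then write the proof as: let $K'$ be a maximal $s$-separated subset of $K$, obtained greedily. By maximality, every point of $K$ is within distance $s$ of some point of $K'$. Fix $q \in K'$; since $K$ is $1$-separated, Lemma~\ref{basic} with $t=1$ shows at most $(2s+1)^n$ points of $K$ lie within distance $s$ of $q$. Summing over $q \in K'$ covers all of $K$, so $|K| \leq |K'|(2s+1)^n$, giving $|K'| \geq |K|/(2s+1)^n$.

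There is no real obstacle here — the statement is a direct pigeonhole consequence of the already-proved Lemma~\ref{basic}. The only thing to be slightly careful about is invoking maximality correctly (every point of $K$, including points already in $K'$, is within distance $s$ of \emph{some} point of $K'$, namely itself or the point that blocked it), and making sure the hypothesis $s \geq 1$ is not actually needed for the argument (it is not, though it is presumably the regime of interest for the application).
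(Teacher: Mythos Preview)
Your proof is correct and is essentially the same as the paper's: both invoke Lemma~\ref{basic} with $t=1$ to bound the number of points of $K$ within distance $s$ of a given point by $(2s+1)^n$, then build $K'$ greedily so that each point added to $K'$ accounts for at most $(2s+1)^n$ points of $K$. Your phrasing via a maximal $s$-separated subset and the covering inequality $|K| \leq |K'|(2s+1)^n$ is just a slightly more explicit version of the paper's one-line greedy argument.
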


\begin{proof}
By Lemma \ref{basic} with $t=1$, for each point $p$, there are at most $(2s+1)^n$ points of $K$ within distance $s$ of $p$ (including $p$ itself). We can then greedily construct the set $K'$, getting at least one point in $K'$ for every $(2s+1)^n$ points from $K$, giving the desired bound.
\end{proof}

Let $Q$ be a random subset of $P$ formed by picking each point in $P$ with probability $x=20^{-n}$ independently of the other points. Let $S$ be the subset of $Q$ where $s \in S$ if and only if there is no other point $s' \in Q$ of distance at most $5/3$ from $s$. By Lemma \ref{basic}, there are at most $(2(5/3)/(1/3)+1)^n = 11^n$ points of $P$ of distance at most $5/3$ from any point. For a given point $p \in P$, the probability that $p \in S$ is therefore at least $x(1-x)^{11^n} > x/2$ as $x=20^{-n}$. 

Let $V_1,\ldots,V_m$ be the Voronoi cells of points in $S$. We will color the points in these Voronoi cells red, including the boundaries, and everything else blue. We consider the periodic coloring of $\mathbb{E}^n$ given by the coloring of $\mathbb{T}_R^n$. Observe that there is a pair of red points of distance one in $\mathbb{T}_R^n$ if and only if there is a pair of red points of distance one in the periodic coloring of $\mathbb{E}^n$ and  there is a blue copy of $K$ in $\mathbb{T}_R^n$ if and only if there is a blue copy of $K$ in $\mathbb{E}^n$.

We first claim that there are no two red points $q$ and $q'$ at distance one. Indeed, if $q$ and $q'$ are in the same Voronoi cell, then, as the diameter of each Voronoi cell is at most $2/3$, we have a contradiction. If $q$ and $q'$ are in copies of the same cell in the periodic tiling, then their distance is at least $R-2/3 > 1$. If $q$ and $q'$ are in different cells, with $q  \in V_p$ and $q' \in V_{p'}$, then, since $q$ has distance at most $1/3$ from $p$ and $q'$ has distance at most $1/3$ from $p'$, $p$ and $p'$ have distance at most $5/3$. However, by construction, if $p \in S$, then $p'$ is not in $S$, so these Voronoi cells are not both red and $q$ and $q'$ cannot both be red. 

To finish the proof, we need to show that with positive probability, there is no blue copy of $K$. Observe that since the points of $K$ have distance at most $R - 1$ from each other, if there is a blue copy of $K$ in the coloring of $\mathbb{E}^n$, then we already have a blue copy in the axis-aligned box with one corner at the origin and side length $3R$. This box contains $3^n|P|$ Voronoi cells, which we label $U_1,\ldots,U_{3^n|P|}$. 

Let $K'$ be a maximum subset of $K$ which is $5$-separated, so $|K'| \geq 11^{-n} |K|$ by Lemma~\ref{sep}. Denote the points of $K'$ by $K'=\{k_0, k_1,\ldots,k_{|K'| - 1}\}$, where we may assume that $k_0$ is the origin and $k_1,\ldots,k_d$ with $d \leq n$ form a basis for the vector space spanned by $K'$, so each element of $K'$ is a linear combination of $k_1,\ldots,k_d$. It suffices to show that with positive probability there is no blue copy of $K'$. For a map $f: \{0,1, \dots, |K'|-1\} \rightarrow \{1, 2, \dots, 3^n|P|\}$, consider the bad event $B_f$ that there is a blue copy of $K'$ with the copy of $k_i$ in $U_{f(i)}$. As each pair of points from $K'$ have distance at least $5$, the Voronoi cells $V_p$ and $V_{p'}$ that they map to under an isometry have centers of distance at least $5-2/3 = 13/3> 2 \cdot 5/3$ apart. Moreover, since $K$ has diameter at most $R - 1$, the centers have distance at most $R - 1 + 2/3 < R$, so points from two copies of the same cell are never used. Hence, the probability that $p$ and $p'$ are in $S$ are independent. Therefore, for any fixed $f$, the probability that $B_f$ happens is at most $(1-x/2)^{|K'|}<e^{-x|K'|/2}$. 

We next estimate the number of bad events $B_f$ that are realizable. That is, the number of $f$ for which there is a copy of $K'$ with the copy of $k_i$ in $U_{f(i)}$. Given a copy of $K'$ in $\mathbb{E}^n$ where $k_i$ maps to $g(i) \in \mathbb{E}^n$ for each $i$, we map the copy of $K'$ to the point $(g(0), g(1),\ldots,g(d)) \in \mathbb{E}^{(d+1)n}$. This is an injective map from the copies of $K'$ in $\mathbb{E}^n$ to $\mathbb{E}^{(d+1)n}$ since the copy of $K'$ is determined by which points $k_0,k_1,\ldots,k_d$ map to. 

Let $U$ be one of the Voronoi cells $U_1,\ldots,U_{3^n|P|}$, with center $p$. The Voronoi cell $U$ is given as the intersection of half-spaces $H_{pp'}$ which contain $p$ and whose boundary is the hyperplane equidistant from $p$ and $p'$. By Lemma \ref{halfspace}, there are at most $5^n$ such half-spaces. The linear inequality defining whether a point $(x_1,\ldots,x_n)$ is in a half-space $H$ is of the form $a_1x_1+\cdots+a_nx_n \leq b$ for some $a_1,\ldots,a_n,b \in \mathbb{E}^1$. As $k_i$ is a linear combination of $k_1, \dots, k_d$, these observations show that, for any copy of $K'$ in $\mathbb{E}^n$ and any $i$ and $j$, we can determine whether $k_i$ is mapped into $U_j$ by considering a system of at most $5^n$ linear inequalities in the $(d+1)n$ coordinates of the point $(g(0), g(1),\ldots,g(d)) \in \mathbb{E}^{(d+1)n}$ that $K'$ is mapped to. Since the number of pairs $(i,j)$ is $|K'| \cdot 3^n|P|$, we can therefore tell which $B_f$ are feasible (i.e., which mappings of the points of $K'$ to Voronoi cells are actually realizable by a copy of $K'$) by the sign patterns of a sequence of $5^n |K'| \cdot 3^n|P|$ linear equations. 

We can now bound the number of feasible $B_f$ by using an appropriate version of the Milnor--Thom  theorem~\cite{Milnor,OP,Thom}. For a discussion of this theorem and its history, as well as the statement we present below, we refer the interested reader to Section 6.2 of Matou\v sek's book~\cite{Mat}.

\begin{theorem}
For $M \geq N \geq 2$, the number of sign patterns of $M$ polynomials in $N$ variables, each of degree at most $D$, is at most $\left(\frac{50DM}{N}\right)^N$. 
\end{theorem}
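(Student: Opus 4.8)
The plan is to deduce this as the standard quantitative form of the Milnor--Thom theorem, in two stages: first reduce to the case of \emph{strict} sign patterns (no zero entries), and then bound the number of strict patterns by the number of connected components of a low-degree real hypersurface, invoking the classical estimates for the topology of real varieties.

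For the reduction, fix for each realizable sign vector one witnessing point $x \in \mathbb{R}^N$; there are only finitely many such points, so we may pick $\delta > 0$ smaller than every nonzero value among the $|f_i(x)|$ over this finite set. Replace each polynomial $f_i$ by the pair $f_i - \delta$ and $f_i + \delta$, obtaining $2M$ polynomials, still of degree at most $D$. At each witnessing point the pair of signs of $(f_i - \delta,\, f_i + \delta)$ is $(+,+)$, $(-,-)$, or $(-,+)$ according as $f_i$ is positive, negative, or zero there, and is never $(0,\cdot)$ or $(\cdot,0)$; from this pair one recovers the sign of $f_i$. Hence distinct realizable sign vectors of $f_1,\dots,f_M$ give distinct realizable \emph{strict} sign vectors of these $2M$ polynomials, and it suffices to bound the number of the latter. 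For any family $q_1,\dots,q_{2M}$, each realizable strict sign vector is constant on every connected component of $\mathbb{R}^N \setminus \bigcup_j Z(q_j)$, where $Z(q_j)$ is the zero set of $q_j$ (apply the intermediate value theorem along a path inside the component), so the number of realizable strict sign vectors is at most the number of connected components of $\mathbb{R}^N \setminus \bigcup_j Z(q_j)$.

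The heart of the argument is bounding this number of components. Replace $\bigcup_j Z(q_j)$ by the single hypersurface $\{\prod_j q_j = 0\}$, of degree at most $2MD$, and, after a generic perturbation and intersection with a large ball so as to work with a smooth compact hypersurface (or its double), apply Morse theory to a generic linear functional: each component contributes a critical point, and the number of critical points is bounded via B\'ezout's theorem applied to the gradient system. This already gives a bound of the shape $(O(MD))^N$. The improvement to $\left(\tfrac{50DM}{N}\right)^N$ --- in particular the saving of a factor $N^{-N}$ --- is Warren's refinement: one arranges that each component can be charged to a critical point lying on the common zero set of at most $N$ of the $2M$ polynomials, so that the count is governed by $\sum_{j \le N}\binom{2M}{j}\,(O(D))^N \le \left(\tfrac{O(M)}{N}\right)^N (O(D))^N$ rather than by $(2M)^N (O(D))^N$; here the hypothesis $M \ge N$ is exactly what makes $\sum_{j\le N}\binom{2M}{j} \le (O(M/N))^N$. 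Combining these and tracking the doubling step gives the claimed bound.

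The main obstacle is this last component count: it is irreducibly topological, relying on the Milnor--Thom machinery (Morse theory on a smoothed real variety, together with B\'ezout), and cannot be gotten by elementary means; in particular the $N^{-N}$ saving requires Warren's charging argument rather than the naive B\'ezout bound on $\prod_j q_j$. Pinning the absolute constant down to $50$ also takes some care in the choice of perturbation, but the gap between $M$ and $N$ and the slack in B\'ezout's inequality leave ample room, and for the application in Theorem~\ref{thm:main} only a bound of the form $\left(cDM/N\right)^N$ with $c$ absolute is actually used.
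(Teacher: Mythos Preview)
The paper does not prove this theorem at all: it is quoted as a black box, with a pointer to Section~6.2 of Matou\v sek's \emph{Lectures on discrete geometry} and citations to Milnor, Ole\u\i nik--Petrovski\u\i, and Thom. So there is nothing in the paper to compare your argument against; the authors simply import the statement and apply it with $D=1$.

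That said, your outline is the standard one (essentially Warren's theorem as presented in Matou\v sek): perturb to reduce to strict sign vectors, pass to connected components of the complement of the union of zero sets, and then bound components via Morse/B\'ezout with Warren's charging refinement to gain the $N^{-N}$. As a sketch this is accurate, and you correctly identify where the hypothesis $M \ge N$ enters. The parts you flag as ``irreducibly topological'' and the tracking of the constant $50$ are exactly the parts one would have to look up in the cited reference; your write-up is honest about this and does not pretend otherwise. For the purposes of this paper, only the shape $(cDM/N)^N$ matters, as you note.
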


Taking $D=1$, $M = 5^n |K'| \cdot 3^n|P| \leq |K'|(60n^{1/2}R)^n$ and $N = (d+1)n$, we see that the number of feasible bad events $B_f$ is at most 
$$\left(\frac{50DM}{(d+1)n}\right)^{(d+1)n}\leq \left(50|K'|(60n^{1/2}R)^n\right)^{2 n^2} \leq e^{2n^2\ln(50 |K'|) + 2n^3\ln(60 n^{1/2} R)}.$$
Therefore, since each event $B_f$ holds with probability at most $e^{-x|K'|/2}$, we see that as long as $x|K'|/2>2n^2\ln(50 |K'|) + 2n^3\ln(60 n^{1/2} R)$, then, with positive probability, the desired coloring exists. By using $x=20^{-n}$, $|K'| \geq 11^{-n}|K|$ and $|K| \geq 10^{4n} \log R$, one may verify that $x|K'|/4 > 2n^2\ln(50 |K'|)$ and $x|K'|/4 > 2n^3\ln(60 n^{1/2} R)$, completing the proof.

\section{Concluding remarks}

Let us say that a set $X  \subset \mathbb{E}^d$ is \emph{$f$-Ramsey} for a function $f : \mathbb{N} \rightarrow \mathbb{N}$ if any coloring of $\mathbb{E}^n$, $n \geq d$, with at most $f(n)$ colors contains a monochromatic copy of $X$. For instance, the result of Frankl and Wilson~\cite{FW} used to prove Theorem~\ref{thm:FW} was the statement that $\ell_2$ is $2^{c'n}$-Ramsey. By substituting any $f$-Ramsey set $X$ for $\ell_2$ in the proof of that theorem, we can easily deduce the following result.

\begin{theorem} \label{thm:super}
For any $f$-Ramsey set $X$, $\mathbb{E}^n \longrightarrow (X,K)$ for any set $K \subset \mathbb{E}^n$ of size at most $f(n)$.
\end{theorem}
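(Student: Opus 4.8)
The plan is to mimic the proof of Theorem~\ref{thm:FW} verbatim, replacing the single use of the Frankl--Wilson theorem with the hypothesis that $X$ is $f$-Ramsey. First I would suppose that $K=\{k_1,\dots,k_t\}\subset\mathbb{E}^n$ has size $t\le f(n)$ and that some red/blue-coloring of $\mathbb{E}^n$ has no blue copy of $K$; the goal is to produce a red copy of $X$. As before, for each point $p\in\mathbb{E}^n$ the translate $p+K$ cannot be entirely blue, so there is at least one index $i$ with $p+k_i$ red. Color $\mathbb{E}^n$ with $t\le f(n)$ colors by assigning to $p$ the minimal such index $i$.

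Now apply the defining property of an $f$-Ramsey set: since this is a coloring of $\mathbb{E}^n$ (with $n\ge d$, where $X\subset\mathbb{E}^d$) using at most $f(n)$ colors, it contains a monochromatic copy of $X$, say with color $j$. Writing this copy as $\{p_x : x\in X\}$ for the appropriate isometric image, every $p_x$ has color $j$, so by construction $p_x+k_j$ is red for every $x$. But $\{p_x+k_j : x\in X\}$ is just the translate by $k_j$ of the monochromatic copy of $X$, hence itself an isometric copy of $X$, and it is entirely red. This contradicts the assumption that there is no red copy of $X$, so in fact every red/blue-coloring of $\mathbb{E}^n$ contains a red copy of $X$ or a blue copy of $K$, i.e. $\mathbb{E}^n\longrightarrow(X,K)$.

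There is essentially no obstacle here — the argument is a direct abstraction of Szlam's proof, and the only point requiring a word of care is that a translate of a monochromatic copy of $X$ is again an isometric copy of $X$, which is immediate since translations are isometries. One should also note that the conclusion of Theorem~\ref{thm:FW} is slightly stronger than stated (one gets a red \emph{translate} of $X$ relative to the monochromatic copy), but for the qualitative statement $\mathbb{E}^n\longrightarrow(X,K)$ this refinement is not needed. Thus the proof is a one-paragraph substitution into the earlier argument.
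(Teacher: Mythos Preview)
Your proposal is correct and is precisely the argument the paper intends: it states explicitly that Theorem~\ref{thm:super} follows ``by substituting any $f$-Ramsey set $X$ for $\ell_2$ in the proof of that theorem,'' and your write-up carries out exactly this substitution in Szlam's argument. Your parenthetical remark that one needs $n\ge d$ for the $f$-Ramsey hypothesis to apply is a fair caveat that the paper leaves implicit.
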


When $X$ is a rectangular parallelepiped or a non-degenerate simplex, results of Frankl and R\"odl~\cite{FR} show that one may take $f(n) = 2^{c'n}$, where $c'$ may depend on the given configuration $X$. For all such $X$, Theorem~\ref{thm:main} easily implies that the estimate on the size of $K$ in Theorem~\ref{thm:super} is best possible up to the constant $c'$ in the exponent. 

Following~\cite{EGMRSS1}, we say that a set $X \subset \mathbb{E}^d$ is \emph{Ramsey} if it is $f$-Ramsey for some function $f : \mathbb{N} \rightarrow \mathbb{N}$ with the property that $f(n) \rightarrow \infty$ as $n \rightarrow \infty$. Theorem~\ref{thm:super} then says that for any Ramsey set $X$ and any finite set $K \subset \mathbb{E}^m$, there exists $n$ such that $\mathbb{E}^n \longrightarrow (X,K)$. In particular, by a beautiful result of K\v r\'i\v z~\cite{K91}, this holds when $X$ is a regular polygon. The following result gives a converse.

\begin{theorem}
Assuming the axiom of choice, if $X \subset \mathbb{E}^d$ is a finite set which is not Ramsey, there exists a natural number $m$ and a finite set $K \subset \mathbb{E}^m$ such that $\mathbb{E}^n \centernot\longrightarrow (X, K)$ for all $n$.
\end{theorem}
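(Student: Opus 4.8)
The plan is to recast ``not Ramsey'' as a combinatorial boundedness statement, extract a finite gadget by a compactness argument, and then run a short two‑colouring argument. First I would verify the reformulation: a finite set $X$ is \emph{not} Ramsey if and only if there is an integer $r\ge 2$ such that, for every $n$, the space $\mathbb{E}^n$ admits an $r$‑colouring with no monochromatic copy of $X$. The direction we need is easy — if no such $r$ existed, then for each $r$ one could pick a dimension $n_r$ in which every $r$‑colouring contains a monochromatic copy of $X$, and then $X$ would be $f$‑Ramsey for $f(n):=\max\{r:n_r\le n\}$, which tends to infinity. Fix the least such $r$; since the property ``$X$ is $r$‑avoidable in every dimension'' is upward closed in $r$ and fails at $r-1$, there is a dimension $M$ in which \emph{every} $(r-1)$‑colouring of $\mathbb{E}^M$ contains a monochromatic copy of $X$.

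Next I would extract a finite gadget $K\subset\mathbb{E}^M$ with the property $K\to(X)_{r-1}$, meaning that every $(r-1)$‑colouring of the point set $K$ contains a monochromatic subset isometric to $X$. This is where the axiom of choice enters. The colourings $\mathbb{E}^M\to\{1,\dots,r-1\}$ form the compact space $\{1,\dots,r-1\}^{\mathbb{E}^M}$ (Tychonoff); for each isometric copy $\sigma(X)\subset\mathbb{E}^M$ the colourings that are non‑constant on $\sigma(X)$ form a clopen set; and if no finite $K$ had the stated property, these clopen sets would have the finite intersection property, so a point of their total intersection would be an $(r-1)$‑colouring of $\mathbb{E}^M$ with no monochromatic copy of $X$, contradicting the choice of $M$. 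This is the standard compactness argument behind the de Bruijn--Erd\H{o}s theorem, with the copies of $X$ playing the role of hyperedges.

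Finally I would check that this $K$ works, i.e.\ that $\mathbb{E}^n\centernot\longrightarrow(X,K)$ for every $n$. It suffices to treat $n\ge M$, since for smaller $n$ one restricts a good colouring of $\mathbb{E}^M$ to an $n$‑dimensional subspace (every copy of $X$ or of $K$ inside the subspace is also a copy in $\mathbb{E}^M$). For $n\ge M$, take an $r$‑colouring $c$ of $\mathbb{E}^n$ with no monochromatic copy of $X$ and recolour: points in the $r$‑th colour class red, all others blue. There is no red copy of $X$ because the red set is a single colour class of $c$; and a blue copy $\sigma(K)$ of $K$ would make $c$ restrict to an $(r-1)$‑colouring of $\sigma(K)\cong K$, which by the gadget property contains a monochromatic — hence $c$‑monochromatic — copy of $X$, a contradiction. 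The main obstacle is the compactness step, and in particular arranging it so that the isometry‑invariant property $K\to(X)_{r-1}$ transfers to the arbitrary copy $\sigma(K)$ appearing inside $\mathbb{E}^n$; the reformulation in the first step and the recolouring in the last step are both essentially formal.
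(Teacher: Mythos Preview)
Your argument is correct and follows essentially the same route as the paper: pick the least $r$ with $\mathbb{E}^n\overset{r}{\centernot\longrightarrow}X$ for all $n$, use minimality to find a dimension where $(r-1)$ colours force a monochromatic $X$, apply De Bruijn--Erd\H{o}s/compactness to extract a finite $K$, and then recolour one colour class red and the rest blue. The only differences are cosmetic --- you spell out the Tychonoff argument rather than citing De Bruijn--Erd\H{o}s, and your detour through the case $n<M$ is unnecessary since by the very choice of $r$ an $X$-free $r$-colouring of $\mathbb{E}^n$ exists for \emph{every} $n$.
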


\begin{proof}
Since $X$ is not Ramsey, there exists a least natural number $r$ such that $\mathbb{E}^n \overset{r}{\centernot\longrightarrow} X$ for all $n$. By the minimality of $r$, there is an $m$ such that every $(r-1)$-coloring of $\mathbb{E}^m$ contains a monochromatic copy of $X$. But then, by the De Bruijn--Erd\H{o}s theorem (and it is here that we invoke the axiom of choice), there must be a finite set $K \subset \mathbb{E}^m$ such that every $(r-1)$-coloring of $K$ contains a monochromatic copy of $X$. 

Suppose now that $\chi : \mathbb{E}^n \rightarrow \{1, 2, \dots, r\}$ is an $r$-coloring of $\mathbb{E}^n$ containing no monochromatic copy of $X$. We claim that the red/blue-coloring of $\mathbb{E}^n$ where a point is colored red if it received color $1$ under $\chi$ and blue otherwise contains no red copy of $X$ and no blue copy of $K$. Indeed, a red copy of $X$ would yield a copy of $X$ in color $1$ under $\chi$, while a blue copy of $K$ would yield an $(r-1)$-colored copy of $K$ under $\chi$, which, by the choice of $K$, would contain a monochromatic copy of $X$. In either case, this would contradict the definition of $\chi$.
\end{proof}

Being more particular about our choice of $K$, we were unable to decide whether, for every natural number $m$, there exists a natural number $n$ such that $\mathbb{E}^n \longrightarrow (\ell_3, \ell_m)$. It seems unlikely that this holds for large $m$, but we were at a loss to exhibit a coloring which confirms our suspicion. A first step in the right direction was made by Erd\H{o}s et al.~\cite{EGMRSS1}, who showed that $\mathbb{E}^n \centernot\longrightarrow (\ell_6, \ell_6)$ for all $n$.

As a final note, we mention another problem of Erd\H{o}s et al.~\cite{EGMRSS2}: for any natural number $n$, does there exist a natural number $m$ depending only on $n$ such that for every set $K \subset \mathbb{E}^n$ of size $m$ there is a two-coloring of $\mathbb{E}^n$ with no monochromatic copy of $K$? By rescaling, we may assume that the smallest distance between any two points in $K$ is equal to one and, therefore, that $\ell_2 \subset K$ and $K$ is $1$-separated. 
Theorem~\ref{thm:main} then implies that if the diameter of $K$ is at most $R - 1$ and $m \geq 10^{4n} \log R$, there is indeed a two-coloring of $\mathbb{E}^n$ with no monochromatic copy of $K$. This partially answers the question of Erd\H{o}s et al.~and a complete answer would follow if we could remove the dependence on $R$ in Theorem~\ref{thm:main} (a problem which is also interesting in its own right). 

\vspace{3mm}
{\bf Acknowledgements.} This paper was written while both authors were visiting the Simons Institute for the Theory of Computing in Berkeley and we are grateful for their generous support. The authors would also like to thank Noga Alon and Ben Green for helpful discussions. Finally, we wish to thank David Ellis, Ron Graham and an anonymous referee for a number of useful comments and corrections. In particular, the anonymous referee was the one to point us to the paper by Szlam~\cite{Sz01}, helping us to greatly improve the results in the concluding remarks.

\end{document}